\newtheorem{thm}{Theorem}
\newcommand{\naturals}{\mathbb{N}}
\newcommand{\eps}{\varepsilon}
\newcommand{\pistat}{\pi^*}
\begin{document}
\title{Darwinian evolution in Malthusian population growth model and Markov chains}
\author{Mateusz Krukowski}
\affil{Institute of Mathematics, \L\'od\'z University of Technology, \\ W\'ol\-cza\'n\-ska 215, \
90-924 \ \L\'od\'z, \ Poland \\ \vspace{0.3cm} e-mail: mateusz.krukowski@p.lodz.pl}
\maketitle

\begin{abstract}
The paper is devoted to the study of Darwinian evolution in two mathematical models. The first one is a variation on the Malthusian population growth model with Verhulst's environmental capacity. The second model is grounded in the theory of Markov chains and their stationary distributions. We prove preliminary results regarding both models and pose conjectures, which are supported by computer simulations.    
\end{abstract}

\smallskip
\noindent 
\textbf{Keywords : } Darwinian evolution, Malthusian population growth model, Markov chains, eigenvalues and eigenvectors
\vspace{0.2cm}
\\
\textbf{AMS Mathematics Subject Classification (2020): } 92D15, 92D25

\section{Introduction}
\label{section:Introduction}

The observation that crossbreeding of animals and plants could favour certain desirable traits had been known to breeders and farmers for millenia. Arguably the most familiar example is the grey wolf (Canis lupus), whose domestication resulted in over two hundred breeds of dog (Canis lupus familiaris). Another case in point is the wild cabbage (Brassica oleracea), which the horticulturalists have ``chiseled into'' vegetables such as broccoli, Brussels sprouts, cauliflower, kale, kohlrabi, romanescu etc. It is quite remarkable that these instances did not develop into a comprehensive theory until the dawn of XIX century. 

Over hundred and fifty years prior to Darwin setting foot on board HMS Beagle, the scientific revolution began to transform people's perspective on the world around us. Giant leaps in understanding the laws of nature were not restricted to physics or chemistry, but pertained to biology in equal measure. Arguably, every comprehensive list of the brightest stars of the era should include:
\begin{itemize}
	\item Georges--Louis Leclerc, Comte de Buffon (1707--1788), who devoted over 40 years to compile a monumental 36--volume treatise \textit{Histoire Naturelle},
	\item Erasmus Darwin (1731--1802), Charles' grandfather, who anticipated that ``all warm--blooded animals have arisen from one living filament'', (see Section XXXIX ``Of Generation'' in \cite{ErasmusDarwin})
	\item Jean-Baptiste de Lamarck (1744--1829), a father of transmutation theory and first evolutionary framework (see \cite{Lamarck}).
\end{itemize}

Charles Darwin (1809--1882), studied at the University of Edinburgh, where he came across the ideas of Lamarck. After moving to Cambridge, he read William Paley's \textit{Natural Theology} (see \cite{Paley}) and Alexander Humboldt's \textit{Personal Narrative}, which might have instigated dreams of exotic adventures. An opportunity came knocking on Darwin's door, when one of his professors, John Henslow, recommended him for a voyage organised by viceadmiral Robert FitzRoy. A five year long expedition around the globe has certainly made a lasting impression on the young Englishman, straight out of university. Witnessing slavery in Brasil or an earthquake in Chile must have had an impact on Darwin, but it were his encounters with exotic fauna on isolated islands (like finches in Galapagos Archipelago) that eventually immortalized his name. 

HMS Beagle concluded its journey in 1836, but Darwin continued to perfect his work for over 20 years, publishing \textit{On the origins of species} in 1859 (see \cite{Darwin}). At its core, his theory of evolution rests on two pillars:
\begin{itemize}
	\item imperfect replication, which allows for tiny mutations in every new generation, and
	\item natural selection, which is an external force exerted on species eliminating weak individuals.
\end{itemize}

\noindent
Both pillars go hand in glove and can be thought of two sides of the same coin. Imperfect replication accounts for diversity and exploration of multitude of options, which are later trimmed down by natural selection. Individuals with weak variants of the mutation swiftly perish, leaving only the fittest to prosper, i.e., reproduce. The survival and passing on the genes are reserved only for the best adapted.     

The current paper is focused on studying Darwinian ideas through the lens of two well--known mathematical domains: population growth models and Markov chains. Section \ref{Section:PearlVerhulst} introduces Malthusian model of population dynamics and proceeds with a discussion of Verhulst's environmental capacity. We build on these models by adding two pillars of Darwinian evolution. Function $f_{mut}$ is designed to account for imperfect replication, while $f_{mort}$ represents the invisible hand of natural selection. The goal of the section is to provide ample evidence (both in the form of mathematical theorems as well as computer simulations), that the introduction of the two pillars results in Darwinian evolution.   

Section \ref{Section:Markov} shifts our focus to Markov chains, but the overal aim remains unaltered. We commence by reviewing the fundamentals of Markov chain theory with special emphasis on stationary distributions. Next, we introduce the expected average velocity as a measure of Darwinian evolution and establish its properties under particular form of the transition matrix. We conclude with a series of open questions regarding possible generalizations of our results. These conjectures are motivated by a number of computer simulations, also described in the paper.

\section{Evolution in Verhulst model of population growth}
\label{Section:PearlVerhulst}

In 1798, English economist and cleric Thomas Robert Malthus (1766--1834) published ``\textit{An Essay on the Principle of Population}'' (see \cite{Malthus}). In the first chapter, Malthus expressed his concern regarding the threat of overpopulation: ``\textit{(...) the power of population is indefinitely greater than the power in the earth to produce subsistence for man. Population, when unchecked, increases in a geometrical ratio. Subsistence increases only in an arithmetical ratio. A slight acquaintance with numbers will shew the immensity of the first power in comparison of the second.}'' Malthus went on to provide the following example in the second chapter of his treatise: ``\textit{Taking the population of the world at any number, a thousand millions, for instance, the human species would increase in the ratio of -- 1, 2, 4, 8, 16, 32, 64, 128, 256, 512, etc.}'' The underlying model of Malthus' reasoning takes the form $P_{n+1} = 2P_n,$ where $P_n$ stands for the size of (human) population at year $n$. Surprisingly, Malthus's intuition is, to some extent, corroborated by empirical data. In 1999, United Nations Department of Economic and Social Affairs issued a booklet \textit{The World at Six Billion} containing the estimates of the world population within the last millennium:
\begin{center}
\begin{tabular}{ |c|c|c|c|c|c|c|c|c } 
 \hline
 year & 1000 & 1250 & 1500 & 1750 & 1800 & 1850 & 1900 & 1910  \\ \hline
 population (in billions) & 0.31 & 0.40 & 0.50 & 0.79 & 0.98 & 1.26 & 1.65 & 1.75 \\ \hline
\end{tabular}
\end{center}

\begin{center}
\begin{tabular}{ c|c|c|c|c|c|c|c|c|} 
\hline
1920 & 1930 & 1940 & 1950 & 1960 & 1970 & 1980 & 1990 & 1999 \\ \hline
1.86 & 2.07 & 2.30 & 2.52 & 3.02 & 3.70 & 4.04 & 5.27 & 5.981 \\ \hline
\end{tabular}
\end{center}

\noindent
Plotting the data (see Figure \ref{worldpopulation}), we see that the world population follows an exponential trajectory, as forseen by the author of the essay. 

Successful as Malthusian model is, we would be gullible to believe that the exponential trend can continue indefinitely. If that were the case, we would eventually run out of room for new people to be born and the Earth would be too crammed even to move. Thus, demographic experts believe that we are steadily approaching the point when the exponential trend will be broken and the exponential Malthusian model $P_{n+1} = 2P_n$ will no longer apply. The world population is estimated to saturate around 11 billion people by the end of the 21st century (see Chapter 3 in \cite{Rosling}). Thus, the Malthusian model of indefinite exponential growth needs a major revision.
\begin{figure}
\centering
\includegraphics[width=0.8\textwidth]{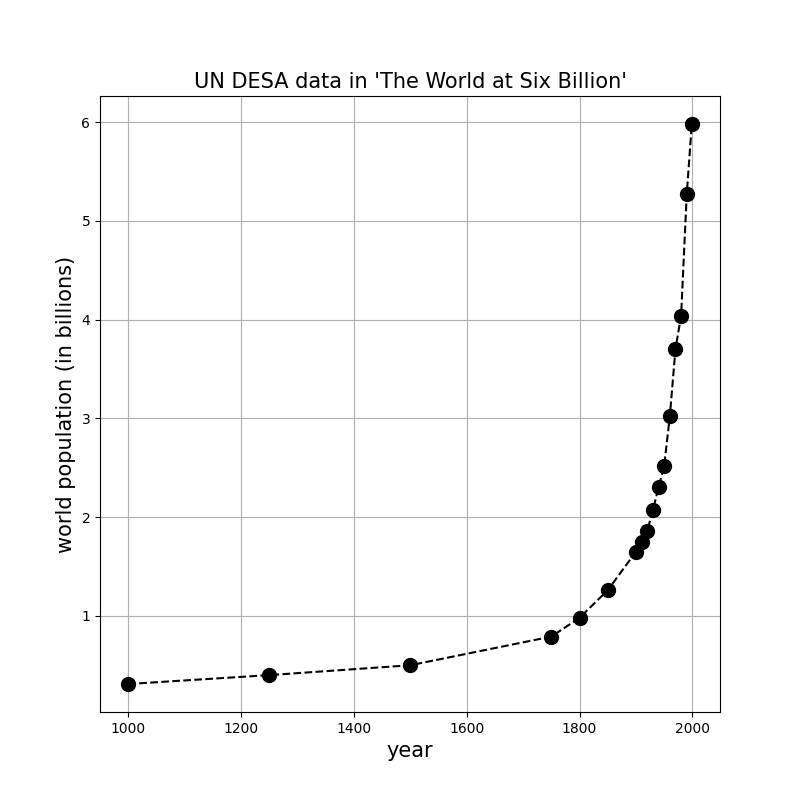}
\caption{World population according to UN DESA booklet ``The World at Six Billion''}
\label{worldpopulation}
\end{figure}

Arguably the most popular modification of Malthusian model dates back to roughly 40 years after ``\textit{An Essay on ...}'', when Pierre Francois Verhulst (see \cite{Verhulst}) argued that even if ``\textit{one has doubled the yield of the soil in the first twenty-five years, one will have scarcely made it produce perhaps a third more in the second period. The virtual growth of the population thus finds a limit in the area and fertility of the country, and the population will consequently tend to become more and more stationary. (...) All of the formulas by which one attempts to represent the law of population must then satisfy the condition that they admit a maximum that is attained only after an infinitely-extended epoch. That maximum will be the population count after it has become stationary.}'' Verhulst investigated the differential model
\begin{gather}
\frac{dP}{dt} = rP\left(1 - \frac{P}{K}\right),
\label{Verhulstequation}
\end{gather}

\noindent
where $r$ is the population growth rate and $K$ is the environmental capacity. He applied the formula to study the population of France, Belgium and the county of Essex between 1811 and 1831. 

Since we will be concerned with computer simulations shortly, we prefer to use a discrete version of Verhulst's model, namely
\begin{gather}
P_{n+1} = (1-m)P_n + bP_n\left(1 - \frac{P_n}{K}\right)\mathds{1}_{[0,K]}(P_n),
\label{discreteVerhulstwithmortality}
\end{gather}

\noindent
in which we have replaced the growth rate $r$ with mortality rate $m\in (0,1]$ and birth rate $b > 0.$ The term $(1-m)P_n$ means that death takes it toll on the old generation $P_n$, i.e., not everyone lives to become part of the new generation $P_{n+1}.$ On the other hand, $bP_n$ models the number of babies the generation $P_n$ produces. The population keeps reproducing and eventually hits the environmental capacity $K,$ in which case people are too busy fighting for survival and they don't multiply at all, hence the term $\left(1 - \frac{P_n}{K}\right)\mathds{1}_{[0,K]}(P_n).$ Let us observe that $\mathds{1}_{[0,K]}(P_n)$ ensures that the number of delivered babies is never negative. Without it, we would allow for ``negative births'' if $P_n$ exceeded the capacity $K.$ 

Verhulst's discrete model \eqref{discreteVerhulstwithmortality} assumes that the members of the population are indistinguishable. The individuals are identitcal, so the only evolution we can witness is the evolution of the population as a whole. Interesting as it is in and of itself, we proceed with introducing a ``spice'' to our model, i.e., a single trait which divides the whole population $P_n$ into clusters. Our goal is to demonstrate that Darwinian evolution is a phenomenon emerging from Verhulst's discrete model, once we account for its driving forces: mutations amongst offspring and selection mechanism. 

Let the function $P_n: V \longrightarrow [0,\infty)$ describe the population with $V$ standing for possible values of the the trait and $P_n(v)$ standing for the number of individuals in generation $n$ with the trait at level $v$. We find it convenient to think of the population as rabbits (rather than humans like Malthus or Verhulst) and the trait as their average velocity. Adopting this terminology, $P_n(v)$ is the number of rabbits with average velocity $v.$ The units are unimportant for our deliberations, but we might as well stick to km$/$h. Our model presupposes that $v$ cannot exceed some maximal value $v_{max}$, since there is a physical limit to how fast an animal can run.  

We assume that every individual with trait $v$ can give birth to an identical individual with probability $f_{mut}(0),$ but can also give birth to individuals with trait $v+1$ with probability $f_{mut}(1)$ or trait $v-1$ with probability $f_{mut}(-1)$ etc. This means that the old generation $P_n$ gives birth to 
\begin{gather}
bP_n\star f_{mut}(v) := b\sum_{u=1}^{v_{max}}\ P_n(u)f_{mut}(v-u)
\label{mutationconvolution}
\end{gather}

\noindent
rabbits with trait $v$. We make the following (hopefully quite reasonable) assumptions regarding the mutation function:
\begin{enumerate}
	\item $f_{mut}:\{-v_{max}, \ldots, 0, \ldots, v_{max}\} \longrightarrow [0,1],$ which is a necessary condition for the convolution \eqref{mutationconvolution} to be well-defined.
	\item $f_{mut}(-u) = f_{mut}(u)$ for every $u.$ This means that both ``up'' and ``down'' mutations are equally likely. The mutation function itself does not  presuppose whether it is better to be an individual with trait $v+1$ or $v-1.$ It does not discriminate whether it is better to run faster or slower, such a preference is contingent upon the mortality function. 
	\item $u\mapsto f_{mut}(u)$ is decreasing for nonnegative $u$. This means that if the parent-rabbits have trait $v$, then it is most likely that the offspring will also have trait $v$. It is far less likely that the offspring will inherit traits $v\pm 1,$ not to mention traits $v\pm 2$ etc.
	\item We have 
	$$\|f_{mut}\|_1 := \sum_{u=-v_{max}}^{v_{max}}\ f_{mut}(u) = 1,$$
	
	\noindent
	which simply states that $f_{mut}$ is a probability distribution. 
\end{enumerate}

As for the second pillar of Darwinian evolution we introduce a selection mechanism in the form of mortality rate function $f_{mort}: V \longrightarrow (0,1).$ The function accounts for the fact that specimen with different average velocities fare differently in the environment -- some perish shortly after birth, others thrive and manage to pass on their genes. Who lives and who dies is dictated by the mortality function, which is a dependent on the trait $v$.

We are now ready to put forward our model of population dynamics with a variable trait, i.e.,   
\begin{equation}
P_{n+1}(v) = (1 - f_{mort}(v))P_n(v) + bP_n \star f_{mut}(v) \left(1 - \frac{\|P_n\|_1}{K}\right)\mathds{1}_{[0,K]}(\|P_n\|_1)
\label{model}
\end{equation}

\noindent
for every $v = 1,\ldots,v_{max}$, where 
$$\|P_n\|_1 := \sum_{v = 1}^{v_{max}}\ P_n(v)$$ 

\noindent
We believe it bears strong resemblance to Verhulst's discrete equation \eqref{discreteVerhulstwithmortality}, from which it originated.  

One of the main properties of the model, which is a result of environmental capacity, is the boundedness of the population. In order to prove this fact rigorously, let 
$$F_{mut}(u) : = \sum_{v = 1}^{v_{max}}\ f_{mut}(v-u)$$

\noindent
for every $u=1,\ldots,v_{max}$ and let $\|\cdot\|_1, \|\cdot\|_2$ and $\|\cdot\|_{\infty}$ be the $\ell_1, \ell_2$ and maximum norm, respectively. 

\begin{thm}
The population is bounded at all times, i.e., there exists an upper bound 
\begin{gather}
B_{up} := \max\left(\frac{(\|1-f_{mut}\|_2 + b\|F_{mut}\|_{\infty})^2}{4b\|F_{mut}\|_{\infty}}\cdot K, K, \|P_0\|_1\right)
\label{Bup}
\end{gather}

\noindent
such that $\|P_n\|_1 \leqslant B_{up}$ for all $n\in\naturals_0.$
\end{thm}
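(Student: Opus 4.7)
The plan is to prove the bound by induction on $n$ after collapsing the vector recursion \eqref{model} to a scalar one for $\|P_n\|_1$. Summing \eqref{model} over $v \in \{1, \ldots, v_{max}\}$, I swap the order of summation in the convolution and get $\sum_v (P_n \star f_{mut})(v) = \sum_u P_n(u) F_{mut}(u) \leq \|F_{mut}\|_\infty \|P_n\|_1$. For the mortality part, Cauchy--Schwarz yields $\sum_v (1-f_{mort}(v))P_n(v) \leq \|1-f_{mort}\|_2 \|P_n\|_2 \leq \|1-f_{mort}\|_2 \|P_n\|_1$, the last step being the standard $\ell_2 \leq \ell_1$ inequality. (I read the $\|1-f_{mut}\|_2$ in the statement as a typographical variant of $\|1-f_{mort}\|_2$, which is what this calculation naturally produces.)

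With the abbreviations $a := \|1-f_{mort}\|_2$, $c := b\|F_{mut}\|_\infty$ and $x := \|P_n\|_1$, the above estimates combine to
\begin{equation*}
\|P_{n+1}\|_1 \leq a x + c x \left(1 - \frac{x}{K}\right) \mathds{1}_{[0,K]}(x).
\end{equation*}
I then split into two regimes. If $x > K$, the indicator vanishes and, using $0 < f_{mort}(v) < 1$ pointwise, I get $\|P_{n+1}\|_1 \leq \|P_n\|_1$. If $x \leq K$, the right-hand side is a downward-opening parabola in $x$ whose unconstrained maximum sits at $x^* = K(a+c)/(2c)$ with value $K(a+c)^2/(4c)$, precisely the first entry of $B_{up}$. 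A one-line AM--GM check ($2ac \leq a^2 + c^2$) ensures this value still dominates the boundary value $aK$ at $x = K$, so it is a legitimate upper bound on all of $[0,K]$.

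The induction then closes cleanly. The base case $\|P_0\|_1 \leq B_{up}$ is guaranteed by the last entry of the maximum defining $B_{up}$. For the inductive step, if $\|P_n\|_1 > K$ then $\|P_{n+1}\|_1 \leq \|P_n\|_1 \leq B_{up}$, and if $\|P_n\|_1 \leq K$ then $\|P_{n+1}\|_1 \leq K(a+c)^2/(4c) \leq B_{up}$. The main obstacle is bookkeeping rather than depth: choosing Cauchy--Schwarz (not H\"older) for the mortality sum so the coefficients in $B_{up}$ are reproduced, and verifying that the unconstrained parabolic maximum remains a bound even when $x^*$ leaves $[0,K]$.
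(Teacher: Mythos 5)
Your proof is correct and follows essentially the same route as the paper's: collapse \eqref{model} to a scalar recursion for $\|P_n\|_1$, bound the convolution term by $\|F_{mut}\|_{\infty}\|P_n\|_1$, split on whether $\|P_n\|_1\leqslant K$, bound the resulting downward parabola by its vertex value, and close by induction. The only divergence is in the mortality coefficient: the paper uses $\|1-f_{mort}\|_{\infty}\|P_n\|_1$ (its displayed constant $\|1-f_{mut}\|_2$ being evidently a typo for a mortality-function norm), whereas your Cauchy--Schwarz step reproduces the stated $\|\cdot\|_2$ coefficient --- and you correctly fall back on the pointwise bound $1-f_{mort}(v)<1$ in the $\|P_n\|_1>K$ case, where $\|1-f_{mort}\|_2$ could exceed $1$.
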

\begin{proof}
Firstly, we observe that 
$$\|F_{mut}\|_{\infty} := \max_u\ F_{mut}(u) \leqslant 1$$ 

\noindent
due to $f_{mut}$ being a probability distribution. Furthermore, applying $\|\cdot\|_1$ to \eqref{model} we have 
\begin{equation}
\begin{split}
\|P_{n+1}\|_1 &\leqslant \|(1- f_{mort}) P_n\|_1 + \|b P_n \star f_{mut}\|_1 \left(1 - \frac{\|P_n\|_1}{K}\right)\mathds{1}_{[0,K]} \\
&\leqslant \|1-f_{mort}\|_{\infty} \|P_n\|_1 + b \sum_{v = 1}^{v_{max}}\sum_{u = 1}^{v_{max}}\ P_n(u) f_{mut}(v - u) \left(1 - \frac{\|P_n\|_1}{K}\right)\mathds{1}_{[0,K]}(\|P_n\|_1)\\
&\leqslant \bigg( \|1-f_{mort}\|_{\infty} + b \|F_{mut}\|_{\infty} \left(1 - \frac{\|P_n\|_1}{K}\right)\mathds{1}_{[0,K]}(\|P_n\|_1) \bigg) \|P_n\|_1
\end{split}
\label{estimateofP}
\end{equation}

\noindent
We define $B_{up}$ as in \eqref{Bup} and consider two cases: 
\begin{description}
	\item[\hspace{0.4cm} Case 1.] If $\|P_n\|_1 \leqslant K$ then   
	$$\|P_{n+1}\|_1 \stackrel{\eqref{estimateofP}}{\leqslant} \bigg( \|1-f_{mort}\|_{\infty} + b \|F_{mut}\|_{\infty} \left(1 - \frac{\|P_n\|_1}{K}\right) \bigg) \|P_n\|_1.$$
	
	\noindent
	The expression on the right hand side is quadratic in $\|P_n\|_1$ and attains its maximum for 
	$$\frac{\|1-f_{mut}\|_{\infty} + b\|F_{mut}\|_{\infty}}{2b\|F_{mut}\|_{\infty}}\cdot K.$$
	
	\noindent
	Hence 
	\begin{equation*}
	\begin{split}
	\|P_{n+1}\| &\leqslant \bigg(\|1-f_{mort}\|_{\infty} + b \|F_{mut}\|_{\infty} \left(1 - \frac{\|1-f_{mut}\|_{\infty} + b\|F_{mut}\|_{\infty}}{2b\|F_{mut}\|_{\infty}}\right) \bigg)\cdot \frac{(\|1-f_{mut}\|_{\infty} + b\|F_{mut}\|_{\infty})}{2b\|F_{mut}\|_{\infty}}\cdot K\\
	&=\frac{(\|1-f_{mut}\|_{\infty} + b\|F_{mut}\|_{\infty})^2}{4b\|F_{mut}\|_{\infty}}\cdot K \leqslant B_{up}.
	\end{split}
	\end{equation*}
	
	\item[\hspace{0.4cm} Case 2.] If $\|P_n\|_1 \in (K, B_{up}]$ then 
	$$\|P_{n+1}\|_1 \stackrel{\eqref{estimateofP}}{\leqslant} \|1-f_{mort}\|_{\infty} \|P_n\|_1 \leqslant B_{up},$$
	
	\noindent
	since $\|1-f_{mort}\|_{\infty} < 1.$
\end{description}

The two separate cases prove that if $\|P_n\|_1 \leqslant B_{up}$ then $\|P_{n+1}\|\leqslant B_{up}.$ Furthermore, $\|P_0\|\leqslant B_{up}$ and by the principle of mathematical induction we are done.  
\end{proof}

At this stage, we know that the population of rabbits will never exceed a certain upper bound. Next, we want to investigate the situation when the whole population goes extinct. Although we do not have a full characterization, we are able to provide a sufficient condition. As one might expect, the complete annihilation of the population occurs when the mortality rate is ``too big'' compared to the birth rate. The following result fills in the mathematical details of this claim:  

\begin{thm}
If 
\begin{gather}
\|1 - f_{mort}\|_{\infty} + b\|F_{mut}\|_{\infty} \leqslant 1
\label{extinctioncond}
\end{gather} 

\noindent
then $\lim_{n\rightarrow\infty}\ \|P_n\|_1=0$.
\end{thm}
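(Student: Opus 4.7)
The plan is to reuse the key chain of inequalities \eqref{estimateofP} established inside the proof of the previous theorem. Writing $a_n := \|P_n\|_1$, $\alpha := \|1-f_{mort}\|_{\infty}$ and $\beta := b\|F_{mut}\|_{\infty}$, that bound reads
$$a_{n+1} \leqslant \bigl(\alpha + \beta(1 - a_n/K)\mathds{1}_{[0,K]}(a_n)\bigr)\, a_n.$$
Since $f_{mort}$ takes values in $(0,1)$ one has $\alpha < 1$, while the standing hypothesis \eqref{extinctioncond} is exactly $\alpha + \beta \leqslant 1$.

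First I would show that $(a_n)$ is non-increasing. If $a_n > K$ the indicator vanishes and $a_{n+1} \leqslant \alpha a_n < a_n$; if $a_n \in [0,K]$ the multiplier is bounded by $\alpha + \beta \leqslant 1$, so $a_{n+1} \leqslant a_n$. Hence $(a_n)$ converges monotonically to some $a^* \geqslant 0$. The sequence cannot satisfy $a_n > K$ for every $n$, since iterating $a_{n+1} \leqslant \alpha a_n$ would give $a_n \leqslant \alpha^n a_0 \to 0$, contradicting $a_n > K$; therefore from some index $n_0$ onward one has $a_n \in [0,K]$.

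For $n \geqslant n_0$ the hypothesis folds $\alpha + \beta$ into a single factor $\leqslant 1$ and delivers the sharper estimate
$$a_{n+1} \leqslant \bigl((\alpha+\beta) - \tfrac{\beta}{K}a_n\bigr)\, a_n \leqslant \bigl(1 - \tfrac{\beta}{K}a_n\bigr)\, a_n,$$
so passing to the limit $n\to\infty$ yields $a^* \leqslant a^* - \tfrac{\beta}{K}(a^*)^2$, i.e.\ $\beta (a^*)^2 \leqslant 0$. Because $f_{mut}$ is a probability distribution which is decreasing on nonnegative integers, $f_{mut}(0)>0$, whence $\|F_{mut}\|_\infty \geqslant F_{mut}(1) \geqslant f_{mut}(0) > 0$ and $\beta > 0$; this forces $a^* = 0$, as desired. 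The only real obstacle is the bookkeeping across the two regimes $a_n > K$ and $a_n \leqslant K$: geometric contraction with rate $\alpha$ traps the sequence back inside $[0,K]$, and then the logistic-type factor $1 - (\beta/K)a_n$ does the rest via a standard monotone-convergence argument.
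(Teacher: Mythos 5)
Your proof is correct and follows essentially the same route as the paper: reuse the bound \eqref{estimateofP}, reduce to the case $\|P_n\|_1\leqslant K$, observe the sequence is nonincreasing, and pass to the limit in the recursive inequality to force the limit to zero. You are in fact slightly more careful than the paper, since you explicitly justify both the eventual entry into $[0,K]$ via geometric contraction and the positivity of $b\|F_{mut}\|_{\infty}$ needed to conclude $a^*=0$, two points the paper leaves implicit.
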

\begin{proof}
Without loss of generality, we may assume that $\|P_0\|_1 < K$, since if $\|P_0\|_1 \geqslant K$ then there exists $N\in\naturals_0$ such that $\|P_N\|_1 \in [0,K).$
Next, an inductive reasoning reveals that 
\begin{equation*}
\|P_{n+1}\|_1 \stackrel{\eqref{estimateofP}}{\leqslant} \bigg(\|1-f_{mort}\|_{\infty}  + b \|F_{mut}\|_{\infty} \left(1 - \frac{\|P_n\|_1}{K}\right) \bigg) \|P_n\|_1\leqslant \|P_n\|_1
\end{equation*}

\noindent
for every $n\geqslant N,$ which means that the sequence $(\|P_n\|_1)$ is eventually nonincreasing. As such, it has a limit $\ell$, which satisfies 
$$\ell \leqslant \bigg(\|1-f_{mort}\|_{\infty}  + b \|F_{mut}\|_{\infty} \left(1 - \frac{\ell}{K}\right) \bigg) \ell.$$ 

\noindent
This can happen only if $\ell=0,$ which concludes the proof.
\end{proof}

In order to witness the evolution of the population, we assume that the mortality function $f_{mort}$ is decreasing. For the sake of simplicity of our model, we put $f_{mort}(v) = \frac{1}{v}$ and 
\begin{gather}
f_{mut}(k) := \left\{\begin{array}{cl}
0.9 & \text{for }k=0,\\
0.05& \text{for }k=\pm 1,\\
0 & \text{otherwise.}
\end{array}\right.
\label{fmutconcrete}
\end{gather}

\begin{figure}
\centering
\includegraphics[width=\textwidth]{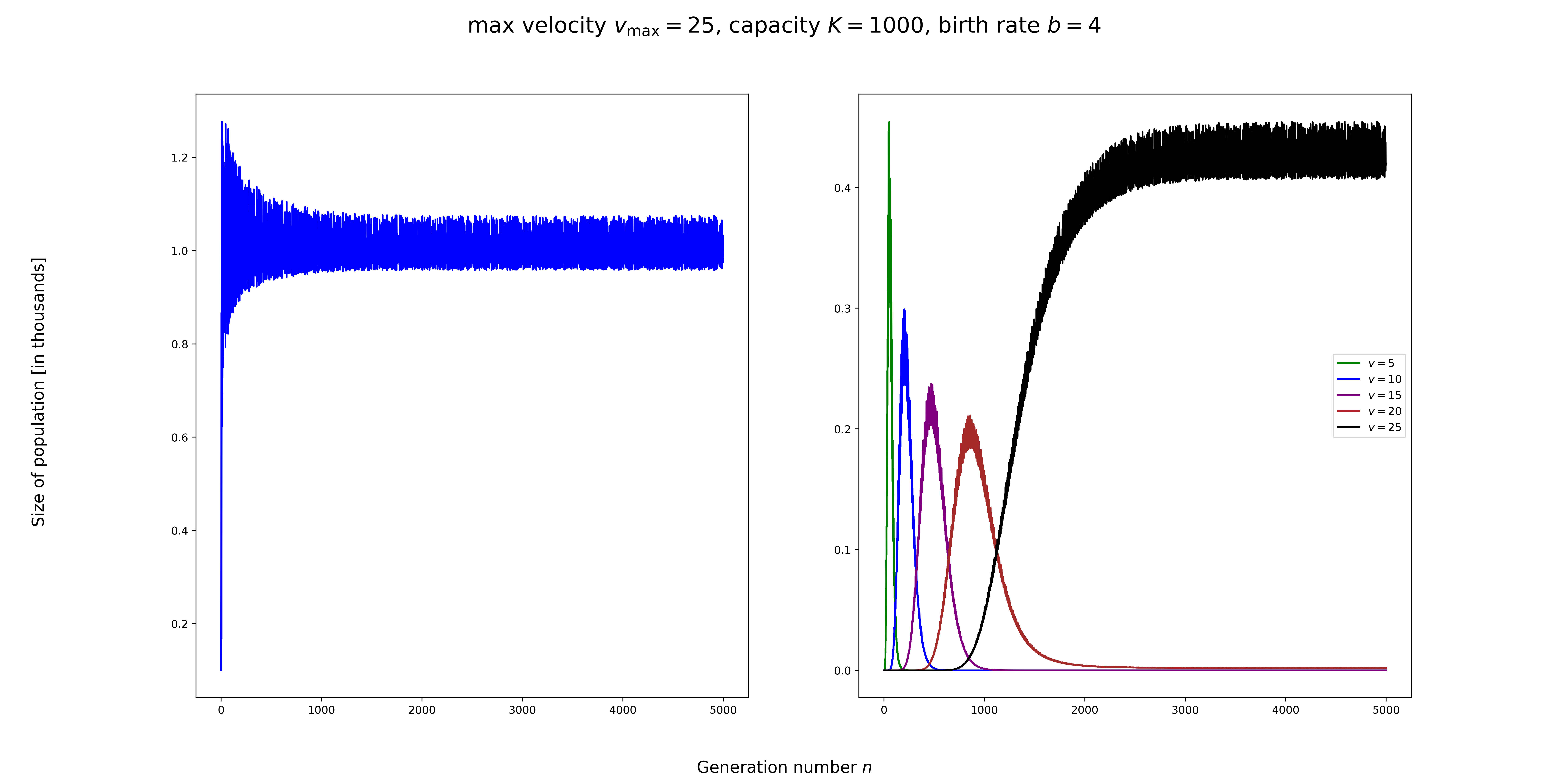}
\caption{Population dynamics in model \eqref{model}}
\label{5velocities}
\end{figure}

\noindent
Starting with the initial fluffle of size 100 with $v=1$, the left plot in Figure \ref{5velocities} depicts the evolution of the whole rabbit population in time. The right plot in Figure \ref{5velocities} focuses on particular ``sections'' of this population at five distinct velocities, namely $v = 5$ (green), $10$ (blue), $15$ (purple), $20$ (brown), $25$ (black). We infer that specimens with low average velocity (green and blue plots) peak at the beginning of the simulation and then die off. Rabbits with middle average velocity (purple and brown plots) last a little longer, but eventually share the fate of their predecessors. They are driven to extinction by individuals, which are ``fitter for survival'', i.e., move faster (black plot). The distribution of the population according to velocity (after $5000$ generations) is depicted in Figure \ref{fighistogram}.
\begin{figure}
\centering
\includegraphics[width=0.8\textwidth]{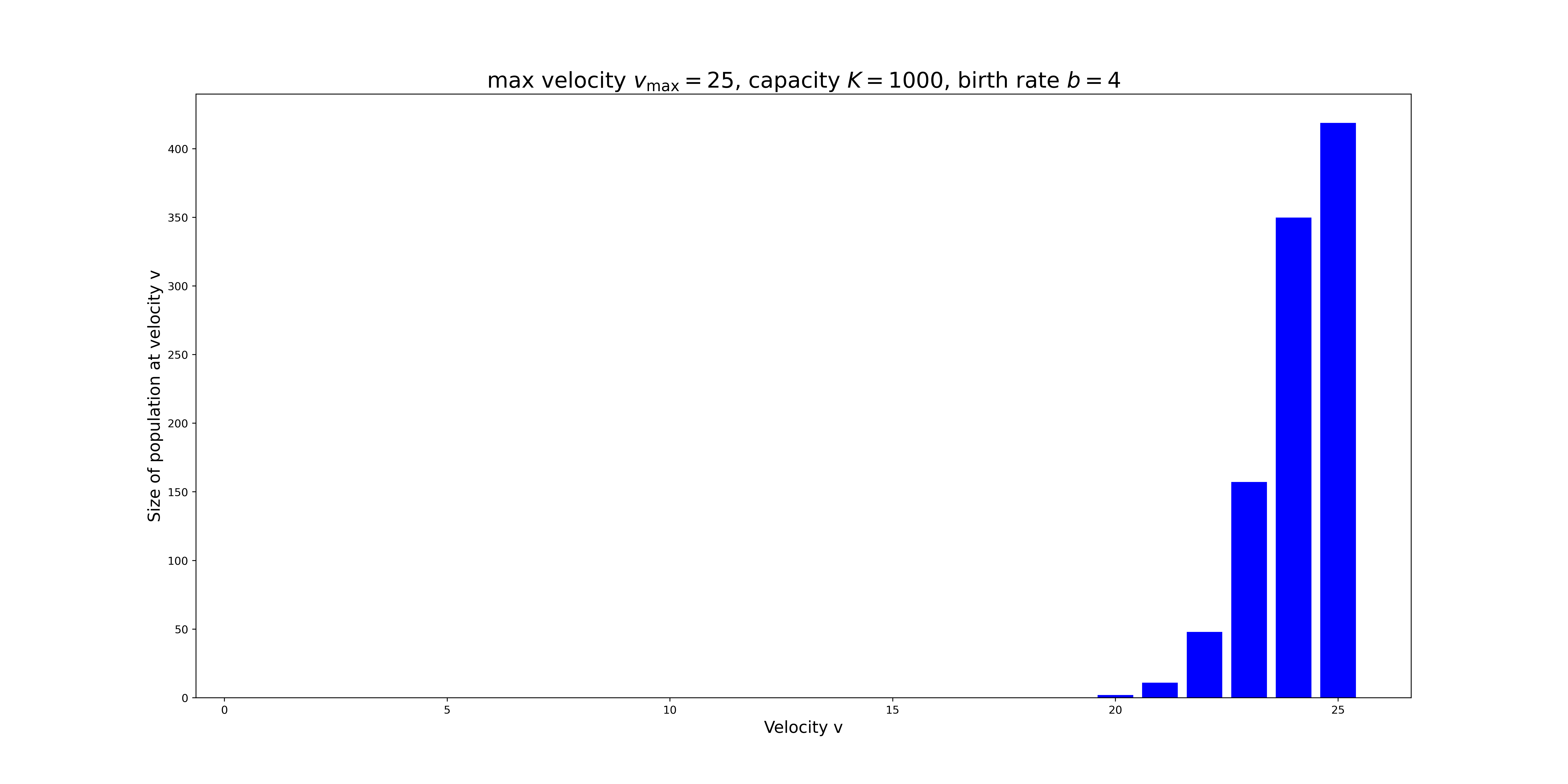}
\caption{Size of population at different velocities after 5000 generations}
\label{fighistogram}
\end{figure}

\begin{figure}
\centering
\includegraphics[width=0.8\textwidth]{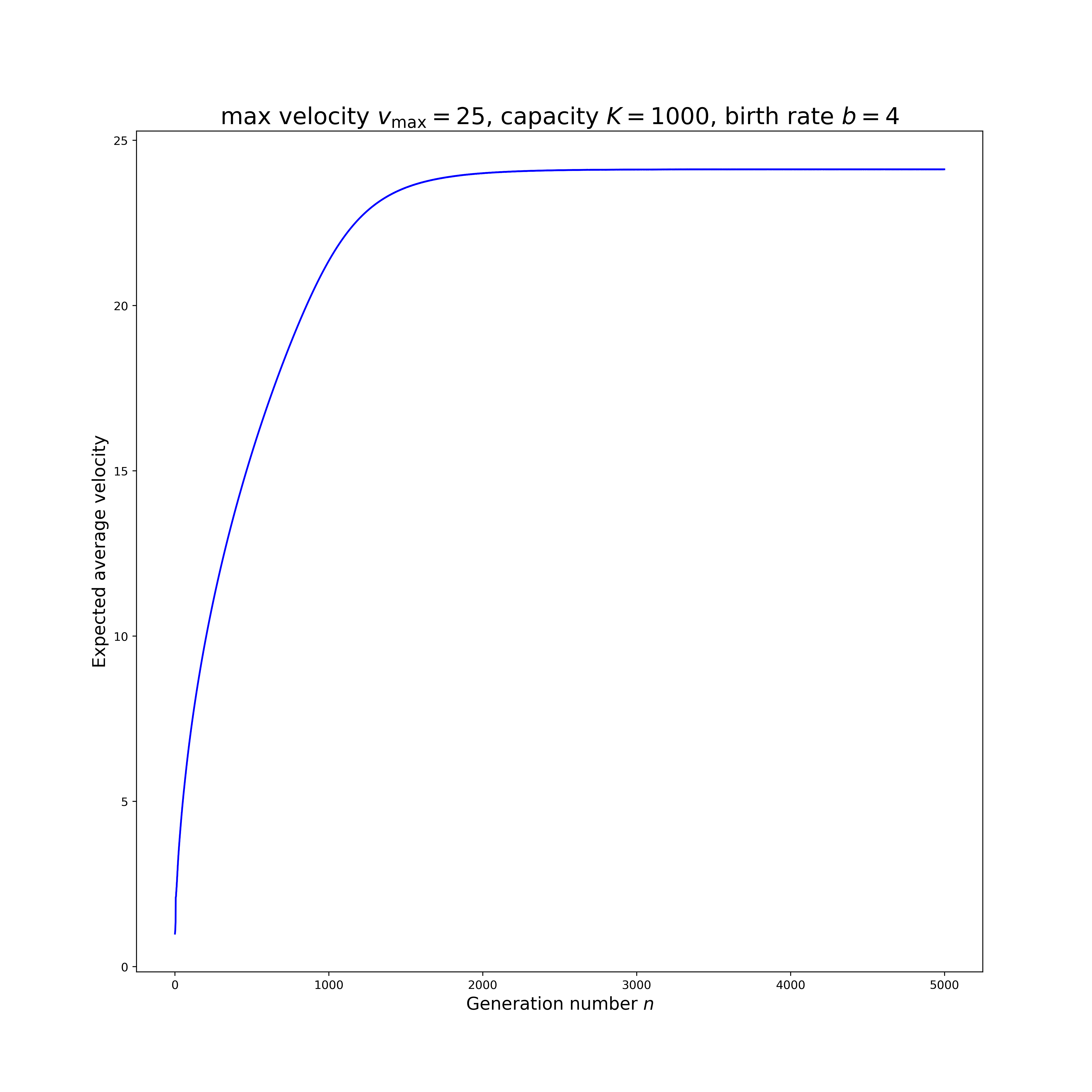}
\caption{Expected average velocity}
\label{expectedaveragevelocity}
\end{figure}

An adjacent piece of evidence supporting Darwinian evolution in our model comes from the expected average velocity of the population, defined as 
$$E_n := \frac{1}{\|P_n\|_1} \sum_{v=1}^{v_{max}}\ vP_n(v).$$

\noindent
In Figure \ref{expectedaveragevelocity}, we see that $E_n$ increases for roughly 2000 generations and then saturates near the maximum possible velocity $v_{max}$. Unfortunately, we are unable to rigorously prove such a behaviour. Hence, in order to stimulate further research, we conclude the current section with the following question: \\

\noindent
\textbf{Open problem.} Assuming that the population does not die off (i.e., $\|P_n\|_1$ does not tend to 0), what is the value of $\liminf_{n\rightarrow \infty}\ E_n?$

\section{Evolution in Markov chains}
\label{Section:Markov}

In the final section of our paper we put forward another model of Darwinian evolution, based on discrete Markov chains (for a thorough exposition of the topic, see \cite{LevinPeresWilmer} or \cite{Norris}). We imagine a population of hares, which can exist in one of $n$ possible states: $s_1, s_2, \ldots, s_n$. We think of these states as representing the average velocity (rounded to one decimal place in order to keep the model discrete) of the population. Thus, a state with low index is interpreted as population with low average velocity, and the opposite is true for a state with high index. 

As time goes by, the old generation of hares is replaced by their offspring, possibly resulting in a different average velocity of the population. This means a  ``jump'' from state $s_i$ to $s_j$ with some probability $p_{i,j}$. The set of all these probabilities is collected in the transition matrix $P$, which satisfies 
$$\sum_{j=1}^n p_{i,j} = 1$$

\noindent
for every $i=1,\ldots,n$. This is because it is certain that a population either stays in the state it is in, or it jumps to some other state. 

Introduction of the transition matrix facilitates the following description of the model's evolution. Suppose that $\pi = (\pi_i)_{i=1,\ldots,n}$ is a probability distribution, meaning that $\pi_i$ is the probability of population being in state $s_i$. Then, the vector $\pi P$ is the probability distribution after one iteration, i.e., after the old generation of hares has been replaced by a new generation. Let us emphasize that we follow the standard convention in Markov chains, where probability distributions are row (rather than column) vectors. This affects the order of multiplication: probability distributions (row vectors) are on the left, transition matrix $P$ are on the right.   

In our model, we want the probabilities $p_{i,j}$ to carry the information regarding both the genetic mutation as well as the external forces exerted on the population. In other words, the transition matrix $P$ should reflect that fact that even if the parent hares run (on average) at velocity $v$, the offspring can run at higher (or lower) velocity. Furthermore, $P$ should also account for the external forces such as a pack of wolves invading the habitat of the population and killing off slower specimens. In that scenario the average velocity should increase, since only the fastest specimens are able to outrun the predators, escaping their fangs.    

For the purposes of our model, we make three crucial assumptions regarding the transition matrix:
\begin{description}
	\item[\hspace{0.4cm} (Assumption 1.)] For every $i=1,\ldots,n$ we have $p_{i,i} := 1-\eps$ for some small value of $\eps\in (0, 0.5)$. This reflects the fact that the population is most likely to stay in the same state, i.e., the shift in the average velocity happens on rare occasions. Formally we would say that our Markov chain is aperiodic (see Chapter 1.3 in \cite{LevinPeresWilmer} for a rather technical discussion of aperiodicity).
	
	\item[\hspace{0.4cm} (Assumption 2.)] For every $i=1,\ldots,n-1$ the sequence $p_{i,i}, p_{i,i+1}, \ldots, p_{i,n-1}, p_{i,n}$ is decreasing and $p_{i,i+1} > 0.$ Furthermore, for every $i=2,\ldots,n$ the sequence $p_{i,1}, p_{i,2}, \ldots, p_{i,i-1}, p_{i,i}$ is increasing and $p_{i,i-1} > 0.$ Intuitively, this means that the jump of the population to further states is less likely than jumping to neighbouring ones. Moreover, since it is always possible for the population to jump to the adjacent state, it follows that every state is reachable from any starting position after sufficient number of jumps. Formally, we say that the Markov chain is irreducible.
	
	\item[\hspace{0.4cm} (Assumption 3.)] For every $i=2,\ldots,n-1$ we have 
	\begin{gather}
	\sum_{j < i} p_{i,j} < \sum_{j > i} p_{i,j}.
	\label{drivingforce}
	\end{gather} 

	\noindent
	The inequality means that there is a constant pressure on the population to increase its average velocity. 
\end{description}

Under the assumptions listed above, we expect the population to evolve towards states with higher indices, i.e. higher average velocities. We devote the rest of the section to support this claim by providing ample evidence, though the proof (in general case) is out of our reach. We commence with one of the gems of Markov theory: 

\begin{thm}(see Corollary 1.17 and Theorem 4.9 in \cite{LevinPeresWilmer})\\
If the Markov chain is irreducible then there exists a unique probability distribution $\pistat$ such that $\pistat P = \pistat.$ Moreover, if the Markov chain is aperiodic, then the sequence $(\pi P^k)_{k\in\naturals}$ converges to $\pistat$ for any initial distribution $\pi$.  
\end{thm}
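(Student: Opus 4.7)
The plan is to establish the three assertions in sequence: existence of a stationary distribution, uniqueness under irreducibility, and convergence under aperiodicity.

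For existence I would invoke Brouwer's fixed-point theorem applied to the linear (hence continuous) self-map $T:\pi\mapsto \pi P$ of the probability simplex $\Delta := \{\pi \in [0,1]^n : \sum_{i=1}^n \pi_i = 1\}$. Row-stochasticity of $P$ ensures $T(\Delta)\subseteq\Delta$, and since $\Delta$ is nonempty, compact and convex, Brouwer yields a fixed point $\pistat$, which is by definition a stationary distribution.

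For uniqueness I would use the probabilistic characterization via return times. Irreducibility on a finite state space forces positive recurrence, so the mean return time $\mathbb{E}_i[T_i^+]$ is finite for each $i$. A standard renewal argument, decomposing the stationary chain into excursions from a fixed reference state, shows that any stationary distribution must satisfy $\pi_i = 1/\mathbb{E}_i[T_i^+]$, which pins it down uniquely. Equivalently, one may invoke Perron--Frobenius: for an irreducible nonnegative matrix the spectral radius is a simple eigenvalue, so the left eigenspace attached to $1$ is one-dimensional and meets $\Delta$ in exactly one point.

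For convergence under the additional assumption of aperiodicity I would employ a coupling argument. The crucial auxiliary fact is \emph{primitivity}: irreducibility combined with aperiodicity on a finite state space implies the existence of $N\in\naturals$ for which every entry of $P^N$ is strictly positive. Granting this, couple two chains $(X_k),(Y_k)$ with transition matrix $P$, started respectively from $\pi$ and $\pistat$, letting them evolve independently until their first meeting time $\tau$ and move together thereafter. Primitivity yields a geometric tail $\mathbb{P}(\tau > jN)\leqslant (1-\delta)^j$ for some $\delta>0$, so $\tau<\infty$ almost surely. Since the total variation distance between the laws of $X_k$ and $Y_k$ is bounded by $\mathbb{P}(\tau>k)$ and $Y_k\sim\pistat$ at every step, we conclude $\pi P^k \to \pistat$ in total variation, hence entrywise.

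The principal obstacle is the primitivity lemma itself. Its proof rests on an elementary number-theoretic fact: the return-time set $\{k : (P^k)_{i,i}>0\}$ is closed under addition and, by aperiodicity, has greatest common divisor equal to $1$, so it eventually contains every sufficiently large integer. Combined with irreducibility, which uniformly bounds the length of connecting paths between any two states, this produces a common $N$ for which all entries of $P^N$ are positive. The step is elementary but fiddly, and constitutes the technical heart of the convergence half of the theorem.
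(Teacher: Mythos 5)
The paper does not actually prove this statement: it is quoted verbatim from the literature (Corollary 1.17 and Theorem 4.9 of Levin--Peres--Wilmer), so there is no internal proof to compare yours against. Your outline is a correct and standard route to the result in the finite-state setting the paper works in, but it differs from the cited source's proofs in two places worth noting. For existence and uniqueness, Levin--Peres--Wilmer do not use Brouwer; they construct the stationary measure directly from expected occupation times during an excursion from a reference state, which simultaneously yields the identification $\pistat_i = 1/\mathbb{E}_i[T_i^+]$ that you only invoke for uniqueness --- your hybrid of Brouwer plus Perron--Frobenius is perfectly valid but uses heavier machinery for what is, on a finite state space, an elementary construction. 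For convergence, the cited Theorem 4.9 uses a Doeblin-type minorization (writing $P^r \geqslant \theta \Pi$ for a rank-one stochastic $\Pi$ and iterating the contraction), whereas you propose independent coupling until the meeting time; both hinge on the same primitivity lemma you correctly identify as the technical heart, and both deliver geometric convergence in total variation. Two small points to tighten if you were to write this out in full: the geometric tail bound for the independent coupling should be justified via $\mathbb{P}(X_{(j+1)N}=Y_{(j+1)N}=z \mid \mathcal{F}_{jN}) \geqslant \delta^2$ with $\delta = \min_{i,j}(P^N)_{i,j} > 0$, and you should confirm that after merging the two trajectories the first coordinate still has transition matrix $P$, which is what lets you conclude $\|\pi P^k - \pistat\|_{TV} \leqslant \mathbb{P}(\tau > k)$.
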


The probability distribution $\pi^*$ is called stationary and, from linear algebra's point of view, its computation amounts to calculating the (left) eigenvector of the eigenvalue $1$ (and normalizing the eigenvector so that the entries sum up to $1$). If our expectations regarding the behaviour of the system are correct, the entries of this eigenvector should be low for low indices and high for high indices. Again, this means that the population will most likely evolve in the direction pointed by the external forces (like a pack of wolves etc.). In order to measure this evolution, we introduce the value
$$E_{\pistat} := \sum_{k=1}^n\ k\pistat_k,$$

\noindent
which we call the expected state. Obviously, it need not be a true state (i.e., an integer between $1$ and $n$) just as the expected value of a die roll (i.e., $3.5$) is not on any of its sides. 

In order to confirm our intuition regarding the behaviour of the expected state, we run several numerical simulations. Our main algorithm (see Algorithm \ref{algorithmMarkov}) uses the following subprograms:
\begin{itemize}
	\item $Uniform(k)$ returns $k$ values drawn from uniform distribution on the unit interval $(0,1),$
	\item $Sum(v)$ returns the sum of all entries in vector $v$, 
	\item $SortAscend(v)$ returns the elements of vector $v$ in ascending order, 
	\item $SortDescend(v)$ returns the elements of vector $v$ in descending order,
	\item $PrincipalEigenvector(P)$ returns the normalized eigenvector with eigenvalue $1$ (i.e., the stationary distribution) of the transition matrix $P$,
\end{itemize}

\noindent
Algorithm \ref{algorithmMarkov} assumes that for every $i=2,\ldots,n-1$ we have
$$\sum_{j < i} p_{i,j} = \delta \eps,\ \hspace{0.4cm}\  \sum_{j > i} p_{i,j} = (1-\delta)\eps$$

\noindent
for a fixed parameter $\delta\in(0,0.5).$ 

\begin{algorithm}
\caption{Stationary distribution $\pi^*$ of transition matrix $P$}\label{algorithmMarkov}
\begin{algorithmic}
\Require $n\in\naturals,\ \eps, \delta \in (0,0.5)$
\For{$i=1,\ldots,n$}
		\State $P(i,i) \gets 1-\eps$
		
		\If{$i==1$}
				\State $v \gets Uniform(n-1)$ 
				\State $v \gets \eps \cdot \frac{v}{Sum(v)}$ 
				\State $v \gets SortDescend(v)$
				\State $P(i, 2\ldots n) \gets v$
				
		\ElsIf{$1<i<n$}
				\State $u \gets Uniform(i-1)$ 
				\State $u \gets \delta \cdot \eps \cdot \frac{u}{Sum(u)}$
				\State $u \gets SortAscend(u)$
		
				\State $v \gets Uniform(n-i)$ 
				\State $v \gets (1-\delta) \cdot \eps \cdot \frac{v}{Sum(v)}$ 
				\State $v \gets SortDescend(v)$
				
				\State $P(i, 1\ldots i-1) \gets u$
				\State $P(i, i+1\ldots n) \gets v$
		
		\ElsIf{$i==n$}
				\State $u \gets Uniform(n-1)$ 
				\State $u \gets \eps \cdot \frac{u}{Sum(u)}$ 
				\State $u \gets SortAscend(u)$
        \State $P(i, 1\ldots n-1) \gets u$
		\EndIf
\EndFor

\State $\pi^* \gets PrincipalEigenvector(P)$
\State \Return $\pi^*$
\end{algorithmic}
\end{algorithm}

\begin{figure}
\centering
\includegraphics[width=\textwidth]{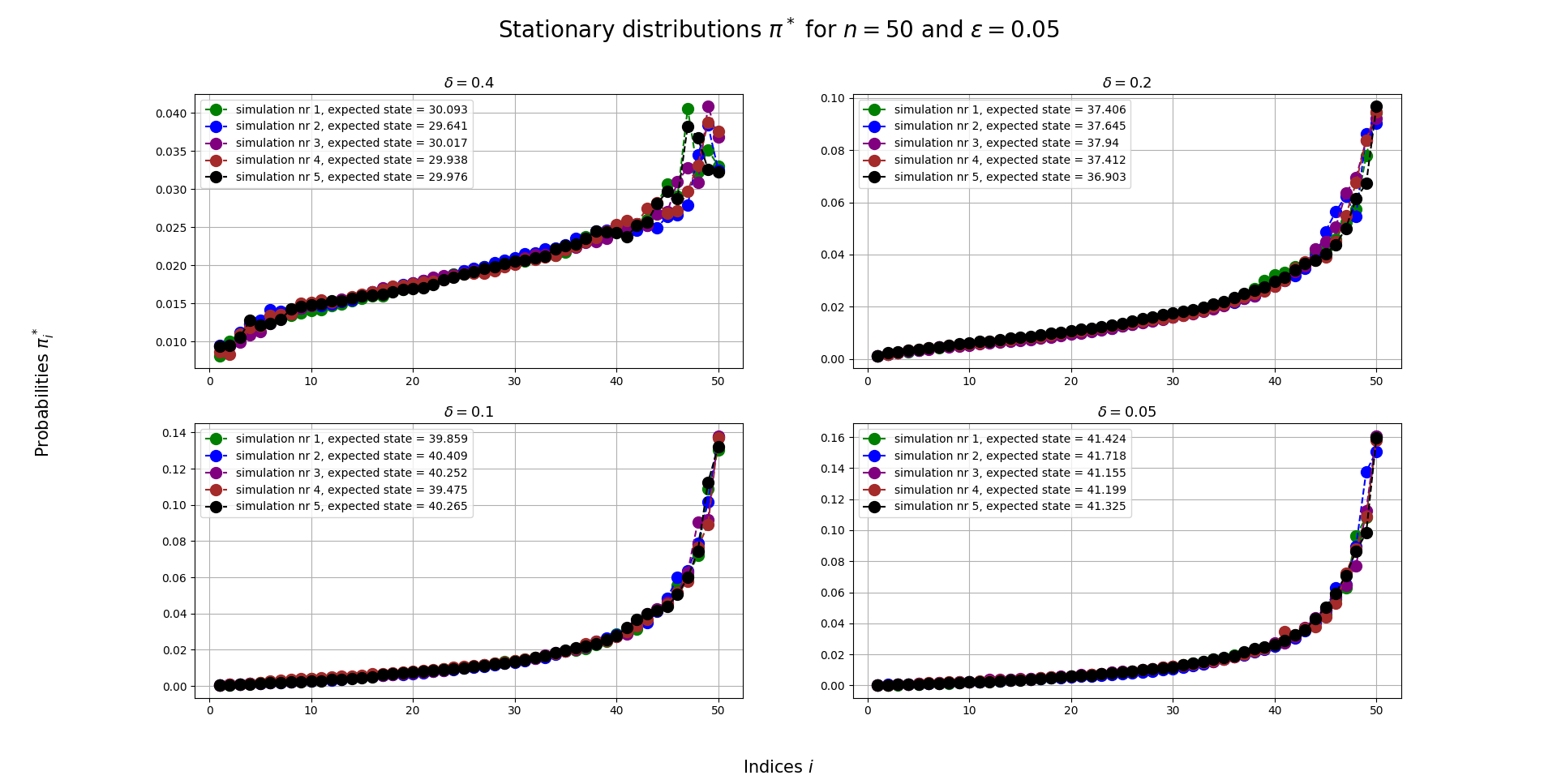}
\caption{Stationary distributions in Markov chains}
\label{Markovsimulation}
\end{figure}

The results of simulations (with $n=50$ states) of Algorithm \ref{algorithmMarkov} are presented in Figure \ref{Markovsimulation}. Every subplot contains 5 simulations performed for a fixed value of $\delta = 0.4, 0.2, 0.1$ or $0.05.$ Even with a naked eye, a common increasing trend is easily detectable throughout all subplots. This tendency amplifies with a decrement of the parameter $\delta$ (corresponding to a stronger environmental pressure towards states with higher indices). The phenomenon is captured by the expected state, which rises from the vicinity of $30$ for $\delta = 0.4$ and exceeds $41$ for $\delta=0.05$. Hence, it is reasonable to suspect that studying the function $\delta \mapsto E_{\pistat}(\delta)$ may lead to a better understanding of Darwinian evolution. Unfortunately, an analysis of the model in full generality eludes our best efforts, so we examine its simplified version:

\begin{thm}
Let $n > 3$ be a natural number and let the transition matrix $P$ be a Hessenberg matrix with $p_{i+1,i} = \delta\eps$ for $i=1,\ldots,n-2$ and $p_{i,i+1} = (1-\delta)\eps$ for $i=2,\ldots,n-1$, i.e.,
\begin{gather}
P(\delta) = \left(\begin{array}{ccccccc}
1-\eps & \eps & 0 & \ldots & \ldots & \ldots & 0\\
\delta\eps & 1-\eps & (1-\delta)\eps & 0 &  &  & \vdots\\
0 & \delta\eps & 1-\eps & (1-\delta)\eps & 0 &  & \vdots \\
\vdots & \ddots & \ddots & \ddots & \ddots & \ddots & \vdots\\
\vdots &  & 0 & \delta\eps & 1-\eps & (1-\delta)\eps & 0\\
\vdots & & & 0 & \delta\eps & 1-\eps & (1-\delta)\eps\\
0 & \ldots & \ldots & \ldots & 0 & \eps & 1-\eps
\end{array}\right).
\label{HessenbergP}
\end{gather}

\noindent 
Then 
\begin{gather}
E_{\pistat}(\delta) = \frac{\delta^{n-2} (2\delta - 1)}{2\delta^{n-1} - 2(1-\delta)^{n-1}} + \frac{\delta^{n-1}(2\delta - 1)}{2(1-\delta)^2 (\delta^{n-1} - (1-\delta)^{n-1})} \cdot \sum_{k=2}^{n-1} k\left(\frac{1-\delta}{\delta}\right)^k + n\frac{(1-\delta)^{n-2} (2\delta - 1)}{2\delta^{n-1} - 2(1-\delta)^{n-1}}
\label{Epistat} 
\end{gather}

\noindent
and, consequently, we have
\begin{gather}
\lim_{\delta \rightarrow 0^+}\ E_{\pistat}(\delta) = \frac{2n-1}{2}.
\label{Epistatlimit}
\end{gather}
\label{Markovchaintheorem}
\end{thm}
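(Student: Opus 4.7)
Since $P(\delta)$ is tridiagonal, the associated Markov chain is a birth--death chain and is therefore reversible, so I propose to bypass the eigenvector equation $\pistat P = \pistat$ entirely and recover $\pistat$ from the detailed balance relations $\pistat_i p_{i,i+1} = \pistat_{i+1} p_{i+1,i}$ for $i = 1, \ldots, n-1$. For interior indices $2 \leqslant i \leqslant n-2$ the ratio $\pistat_{i+1}/\pistat_i$ equals the constant $r := (1-\delta)/\delta$, giving the recursion $\pistat_{k+1} = r \pistat_k$. The two boundary rows must be handled separately, since $p_{1,2} = \eps$ and $p_{n,n-1} = \eps$ differ from the ``interior'' values $(1-\delta)\eps$ and $\delta\eps$; reading off the detailed balance identities at $i = 1$ and $i = n-1$ yields $\pistat_2 = \pistat_1/\delta$ and $\pistat_n = (1-\delta)\pistat_{n-1}$. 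Combining these with the interior recursion expresses every coordinate as an explicit multiple of $\pistat_1$: $\pistat_k = (1-\delta)^{k-2} \delta^{1-k} \pistat_1$ for $2 \leqslant k \leqslant n-1$, and $\pistat_n = (1-\delta)^{n-2} \delta^{2-n} \pistat_1$.

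Next I would enforce $\sum_{k=1}^n \pistat_k = 1$. The interior indices contribute a geometric series in $r$ whose ratio $r - 1 = (1-2\delta)/\delta$ cancels a factor of $\delta$ produced during summation; a short simplification that combines this with the two endpoint contributions yields
$$\pistat_1 = \frac{\delta^{n-2}(2\delta - 1)}{2(\delta^{n-1} - (1-\delta)^{n-1})}.$$
Substituting back into $E_{\pistat}(\delta) = \pistat_1 + \sum_{k=2}^{n-1} k \pistat_k + n\pistat_n$ and, in the middle sum, rewriting $\delta^{n-1-k}(1-\delta)^{k-2}$ as $\frac{\delta^{n-1}}{(1-\delta)^2} r^k$, reproduces the three--term closed form \eqref{Epistat} piece by piece, with no weighted--geometric--sum identity required because the sum $\sum_{k=2}^{n-1} k r^k$ itself is left unevaluated on the right--hand side.

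For the limit \eqref{Epistatlimit} I would send $\delta \to 0^+$. Both $2\delta - 1$ and $\delta^{n-1} - (1-\delta)^{n-1}$ tend to $-1$, so the common prefactor $(2\delta - 1)/[2(\delta^{n-1} - (1-\delta)^{n-1})]$ converges to $1/2$. The first summand of \eqref{Epistat} then vanishes because of the extra factor $\delta^{n-2}$; the last summand yields $n/2$ since $(1-\delta)^{n-2} \to 1$; and in the middle sum, restoring $\delta^{n-1} r^k = \delta^{n-1-k}(1-\delta)^k$ shows that only the top term $k = n-1$ survives the limit and contributes $(n-1)/2$. The three pieces add to $(2n - 1)/2$. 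The main obstacle I anticipate is purely algebraic bookkeeping --- keeping the two boundary relations consistent with the interior recursion and simplifying the normalization into exactly the form displayed in \eqref{Epistat} without stray $(1 - 2\delta)$ factors.
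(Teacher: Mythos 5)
Your proposal is correct and reaches exactly the closed forms in the paper; the only genuine difference lies in how you obtain the coordinate formulas $\pistat_k = (1-\delta)^{k-2}\delta^{1-k}\pistat_1$ and $\pistat_n = \left(\frac{1-\delta}{\delta}\right)^{n-2}\pistat_1$. The paper works directly with the eigenvector equation $\pistat P = \pistat$: comparing components produces three-term relations (each balance equation couples $\pistat_{k-1},\pistat_k,\pistat_{k+1}$), which the author resolves by an explicit induction. You instead observe that the tridiagonal structure makes the chain a birth--death chain, hence reversible, and read the ratios off the detailed balance identities $\pistat_i p_{i,i+1} = \pistat_{i+1}p_{i+1,i}$. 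This buys a clean two-term recursion with constant ratio $r=(1-\delta)/\delta$ in the interior and no induction, at the modest cost of invoking (and, for completeness, you should state) the standard fact that a probability vector satisfying detailed balance is stationary, which together with irreducibility identifies it as \emph{the} stationary distribution. Your boundary relations at $i=1$ and $i=n-1$ correctly account for $p_{1,2}=p_{n,n-1}=\eps$, and your values $\pistat_2=\pistat_1/\delta$, $\pistat_n=(1-\delta)\pistat_{n-1}$ agree with the paper's \eqref{pistatk} and \eqref{pistatn}. From the normalization onward the two arguments coincide: the geometric sum gives the same $\pistat_1$ as \eqref{pistat1}, the rewriting $\delta^{n-1-k}(1-\delta)^{k-2} = \frac{\delta^{n-1}}{(1-\delta)^2}r^k$ reproduces \eqref{Epistat}, and your term-by-term limit analysis (prefactor $\to 1/2$, only $k=n-1$ and $k=n$ surviving) is the same computation the paper performs via $\pistat_{n-1},\pistat_n \to 0.5$, yielding \eqref{Epistatlimit}.
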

\begin{proof}
Since the stationary distribution $\pistat$ satisfies $\pistat(\delta) P(\delta) = \pistat(\delta),$ then comparing the first two components of the equation, we obtain 
\begin{gather*}
\pistat_2(\delta) = \frac{1}{\delta}\pistat_1(\delta) \hspace{0.4cm}\text{ and }\hspace{0.4cm} \pistat_3(\delta) = \frac{1-\delta}{\delta^2}\pistat_1(\delta).
\end{gather*}

\noindent
Carrying out an inductive proof, it is not difficult to establish that 
\begin{gather}
\pistat_k(\delta) = \frac{(1-\delta)^{k-2}}{\delta^{k-1}}\pistat_1(\delta)
\label{pistatk}
\end{gather}

\noindent
for $k=2,3,\ldots,n-1$ and 
\begin{gather}
\pistat_n(\delta) = \left(\frac{1-\delta}{\delta}\right)^{n-2}\pistat_1(\delta).
\label{pistatn}
\end{gather}

Next, we may write the normalization condition 
$$\pistat_1(\delta) + \pistat_2(\delta) + \pistat_3(\delta) + \ldots + \pistat_{n-1}(\delta) + \pistat_n(\delta) = 1$$ 

\noindent
as 
$$\pistat_1(\delta) \left( 1 + \frac{1}{\delta} + \frac{1-\delta}{\delta^2} + \ldots + \frac{(1-\delta)^{n-3}}{\delta^{n-2}} + \left(\frac{1-\delta}{\delta}\right)^{n-2} \right)= 1.$$

\noindent
Applying the formula for the sum of geometric progression and rearranging the terms, we arrive at 
\begin{gather}
\pistat_1(\delta) = \frac{\delta^{n-2} (2\delta - 1)}{2\delta^{n-1} - 2(1-\delta)^{n-1}}.
\label{pistat1}
\end{gather}

\noindent
Combining \eqref{pistatk} and \eqref{pistat1} we have
\begin{gather}
\pistat_k(\delta) = \frac{\delta^{n-1}(2\delta - 1)}{2(1-\delta)^2 (\delta^{n-1} - (1-\delta)^{n-1})} \cdot \left(\frac{1-\delta}{\delta}\right)^k
\label{pistatkfull}
\end{gather}

\noindent
for $k=2,3,\ldots,n-1.$ Next, \eqref{pistatn} and \eqref{pistat1} brought together give 
\begin{gather*}
\pistat_n(\delta) = \frac{(1-\delta)^{n-2} (2\delta - 1)}{2\delta^{n-1} - 2(1-\delta)^{n-1}},
\end{gather*}

\noindent
resulting in the desired form \eqref{Epistat} of $E_{\pistat}.$ 

Finally, since $\lim_{\delta\rightarrow 0^+}\ \pistat_k(\delta) = 0$ for $k=1,\ldots,n-2$ and 
$$\lim_{\delta\rightarrow 0^+}\ \pistat_{n-1}(\delta) = \lim_{\delta\rightarrow 0^+}\ \pistat_n(\delta) = 0.5$$ 

\noindent
we arrive at \eqref{Epistatlimit}. This concludes the proof. 
\end{proof}

Theorem \ref{Markovchaintheorem} is a mathematical testament to Darwinian theory. Once again, it corroborates the claim that species tend to evolve according to external forces acting on them. Much as we are satisfied with such a result, its undeniable downside is the assumption of a very particular form of the transition matrix $P$. Hence, in order to stimulate further research and discussion in the mathematical community, we conclude the paper with the following list of open questions:
\begin{itemize}
	\item What can be said of the expected state $E_{\pistat}$ in the general case, i.e., if the transition matrix $P$ is not necessarily of the form \eqref{HessenbergP}?
	\item Is there any counterpart to $E_{\pistat}$ in the general case or even if $P$ is assumed to be Hessenberg (but not necessarily of the form \eqref{HessenbergP})?
	\item Even if an explicit formula for $E_{\pistat}$ is unattainable, is it possible to establish some generalization of the limit \eqref{Epistatlimit}?
\end{itemize}

\end{document}